\numberwithin{equation}{section} \hyphenation{semi-stable}
\definecolor{ffzzqq}{rgb}{1,0.6,0}
\definecolor{qqqqff}{rgb}{0,0,1}
\definecolor{ffqqqq}{rgb}{1,0,0}
\definecolor{wwzzqq}{rgb}{0.4,0.6,0}
\definecolor{zzwwff}{rgb}{0.6,0.4,1}
\pgfplotsset{compat=1.15}
\newcommand {\PP}{\mathbb{P}}
\newcommand {\sT}{\mathcal{T}}
\newcommand {\sO}{\mathcal{O}}
\newcommand {\sE}{\mathcal{E}}
\newcommand {\sF}{\mathcal{F}}
 \def\cocoa{{\hbox{\rm C\kern-.13em
   o\kern-.07em C\kern-.13em o\kern-.15em A}}}
\newtheorem{theorem}{Theorem}[section]
\newtheorem{lemma}[theorem]{Lemma}
\newtheorem{proposition}[theorem]{Proposition}
\newtheorem{corollary}[theorem]{Corollary}
\newtheorem{question}[theorem]{Question}
\newtheorem{problem}[theorem]{Problem}
 \theoremstyle{definition}
\newtheorem{definition}[theorem]{Definition} \theoremstyle{remark}
\newtheorem{remark}[theorem]{Remark}
\definecolor{MyDarkGreen}{cmyk}{0.7,0,1,0}
\begin{document}

\title[WLP of equigenerated complete intersections. Applications]{Weak Lefschetz property of equigenerated complete intersections. Applications}
 \author[V. Beorchia]{Valentina Beorchia} 
 \address{Dipartimento di Matematica e Geoscienze, Universit\`a di
Trieste, Via Valerio 12/1, 34127 Trieste, Italy}
 \email{beorchia@units.it, 
 ORCID 0000-0003-3681-9045}.
 \author[R.\ M.\ Mir\'o-Roig]{Rosa M.\ Mir\'o-Roig} 
 \address{Facultat de
 Matem\`atiques i Inform\`atica, Universitat de Barcelona, Gran Via des les
 Corts Catalanes 585, 08007 Barcelona, Spain} \email{miro@ub.edu, ORCID 0000-0003-1375-6547}

\thanks{The first author is a member of GNSAGA of INdAM, is supported by MUR funds: PRIN project GEOMETRY OF ALGEBRAIC STRUCTURES: MODULI, INVARIANTS, DEFORMATIONS, PI Ugo Bruzzo, Project code 2022BTA242, and by the University of Trieste project FRA 2025.} 
\thanks{The second author has been partially supported by the grant PID2020-113674GB-I00}

\thanks{{\bf 2020 Mathematics Subject Classification} Primary 14M10; Secondary 14F06, 13E10}

\medskip \noindent
\thanks{{\bf Keywords}. Complete intersection, Lefschetz properties, Artinian algebras, semistability}

\begin{abstract} In this paper, we prove that any Artinian complete intersection homogeneous ideal $I$ in $K[x_0,\cdots,x_n]$ generated by 
$n+1$ forms of degree $d\ge 2$ satisfies the weak Lefschetz property (WLP) in degree $t< d+\lceil \frac{d}{n} \rceil$. As a consequence, we get that the Jacobian ideal of a smooth 3-fold of degree $d\ge 7$ in $\PP^4$ satisfies the weak Lefschetz property in degree $d$, answering a recent question of Beauville \cite{B}. 
\end{abstract}

\maketitle

\section{Introduction}

The present paper concerns the weak Lefschetz property for graded Artinian $K$-algebras $A$, that is when is
the multiplication map $\times \ell:[A]_{i-1}\longleftrightarrow [A]_i$
with a general linear form $\ell \in [A]_1$ has maximal rank for any integer $i>0$.

This property plays a crucial role in the study of
Artinian $K$-algebras, influencing their algebraic structure and leading to unexpected geometric applications. To determine whether an Artinian standard graded $K$-algebra $A$ has the WLP seems a simple problem of linear algebra, but it has proven to be extremely elusive and much more work on this topic remains to be done, see \cite{JMR} and its reference list for more information. 
In particular, the following longstanding problem remains open:

\begin{problem}
  Does {\em any} complete intersection Artinian graded $K$-algebra satisfy WLP?
\end{problem}

When $I = (f_0,\cdots ,f_n)$ is a complete intersection, we know that for a general choice of
the homogeneous forms $f_i$, $0\le i \le n$, $R/I$ has the WLP thanks to a result of \cite{S} and \cite{W}; and it has been asked and conjectured
by many authors whether {\em every} complete intersection has the WLP (and also whether
every complete intersection has the related Strong Lefschetz Property, SLP).
This question was solved in codimension 2 and 3 in \cite{HMNW} and remains open in higher codimension. In codimension 4 and for equigenerated complete intersections, the injectivity does hold in
a very satisfying range \cite{BMMN} and for codimension greater than 5 and equigenerated complete intersections injectivity is only known in the first spot (namely, from degree $d-1$ to degree $d$) where WLP can fail. For more details on these partial results and the state of the art on this question the reader can look at \cite{AR}, \cite{BMMN} and \cite{JMR}.

In five or more variables very little is known beyond the results mentioned above. In
this short note, we consider the case of an arbitrary number of variables and we first deal with the case where all generators have the same degree $d$, although our last result is for
arbitrary degrees $d_0,\cdots, d_n$. We begin by translating the problem to a more general issue: to understand the behaviour of a (stable) vector bundle $\sE$ on $\PP^n$ when we restrict it to a general linear subspace of $\PP^n$. As a main tool we use Grauert-M\"ulich theorem and Flenner's restriction theorem. The question of the stability of the syzygy bundle associated with an Artinian algebra has been investigated also in the case of monomial ideals in \cite{CN}.

In the last section we apply our results to the case of the Jacobian ideal of a smooth hypersurface and to study the variation of the hyperplane section of a smooth hypersurface $X\subset \PP^n$. In particular, we answer a recent question posed by Beauville in \cite{B}.

While the characteristic of the ground field plays a very interesting role in such questions, in
this paper we work over a field of characteristic zero because as one of the main tools we use Grauert-M\"ulich theorem which only works in characteristic 0.

\section{Preliminaries}

In this section, we fix the notation, we recall the basic definitions and we gather together the main results needed later on. 
Throughout this paper $K$ will be an algebraically closed field of characteristic zero, $R=K[x_0,\dots,x_n]$ and $\PP^n={\rm Proj}(R)$.

\begin{definition} \label{WLP def}
Let $A=R/I$ be a graded Artinian $K$-algebra. We say that $A$ has the {\em weak Lefschetz property} (WLP, for short)
if there is a linear form $\ell \in [A]_1$ such that, for all
integers $i> 0$, the multiplication map
\[
\times \ell: [A]_{i-1} \longrightarrow [A]_{i}
\]
has maximal rank.
 In this case, the linear form $\ell$ is called a {\it (weak) Lefschetz element} of $A$. If for the general linear form $\ell \in [A]_1$ and for an integer $j$ the map $\times \ell:[A]_{j-1} \longrightarrow [A]_{j}$ does not have maximal rank, we will say that the ideal $I$ fails the WLP in degree $j$.
\end{definition}

In this short note we focus our attention on Artinian complete intersection graded $K$-algebras.
 Due to their duality properties, the WLP for Artinian complete intersection $K$-algebras is determined by a single map. Indeed, we have:

\begin{lemma}\label{mid map}
 Let $A$ be an Artinian Gorenstein $K$-algebra with $reg(A)=e$. Then the following are equivalent 
  \begin{enumerate}[label=(\roman*),itemsep=0.1cm,topsep=0.1cm]
    \item $\ell\in A_1$ is a weak Lefschetz element on $A$ 
    \item the map $\times \ell: A_{\lfloor\frac{e-1}{2}\rfloor} \to A_{\lfloor\frac{e-1}{2}\rfloor+1}$ has maximum rank (is injective) 
    \item the map $\times \ell: A_{\lfloor\frac{e}{2}\rfloor} \to A_{\lfloor\frac{e}{2}\rfloor+1}$ has maximum rank (is surjective).
    \end{enumerate}
\end{lemma}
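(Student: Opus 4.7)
The plan is to exploit two structural facts about Artinian Gorenstein algebras: a perfect Poincar\'e pairing $A_i \times A_{e-i} \to A_e \cong K$ induced by the one-dimensional socle in degree $e$, and the fact that this socle is concentrated entirely in degree $e$. The pairing identifies $A_{e-i}$ with $A_i^\vee$ and realises $\times \ell : A_{e-i} \to A_{e-i+1}$ as the transpose of $\times \ell : A_{i-1} \to A_i$. Consequently one map is injective precisely when the other is surjective. Taking $i = \lfloor (e-1)/2 \rfloor + 1$ gives $e - i + 1 = \lfloor e/2 \rfloor + 1$, which immediately yields the equivalence (ii)$\Leftrightarrow$(iii).

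The heart of the argument is a monotonicity property: \emph{if $\times \ell : A_{i-1} \to A_i$ is injective for some $i$ with $i - 2 < e$, then $\times \ell : A_{i-2} \to A_{i-1}$ is also injective.} Indeed, given $a \in A_{i-2}$ with $\ell a = 0$, commutativity of $A$ yields $\ell \cdot (b a) = b \cdot (\ell a) = 0$ for every $b \in A_1$, so $a \cdot A_1$ lies in $\ker(\times \ell : A_{i-1} \to A_i) = 0$. Hence $a$ is annihilated by $A_1$, and thus by the entire maximal ideal $A_+$, so $a$ belongs to the socle. Since the socle is concentrated in degree $e$ and $i - 2 < e$, we conclude $a = 0$. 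Dualising via the pairing above, surjectivity of $\times \ell$ at one spot forces surjectivity at every higher spot.

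Combining these, (ii) forces injectivity of $\times \ell : A_{k-1} \to A_k$ for every $k \leq \lfloor (e-1)/2 \rfloor + 1$, and then Gorenstein duality produces surjectivity for every $k \geq \lfloor e/2 \rfloor + 1$. These two ranges exhaust all positive $k$ (meeting in the middle when $e$ is odd, tiling without gap when $e$ is even). Since injectivity or surjectivity of a map between finite-dimensional spaces automatically implies maximum rank, this is exactly (i); the reverse implication (i)$\Rightarrow$(ii),(iii) is tautological. The only delicate step is the monotonicity claim, which rests squarely on the Gorenstein hypothesis: without the socle being concentrated in top degree, the kernel-propagation step would fail.
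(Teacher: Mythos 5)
Your argument is correct and is precisely the standard proof underlying the reference the paper cites for this lemma (\cite[Proposition 2.1]{MMN}): Gorenstein duality identifies $\times \ell: A_{i-1}\to A_i$ with the transpose of $\times \ell: A_{e-i}\to A_{e-i+1}$, and injectivity propagates downward because a degree-$(i-2)$ element killed by $A_1$ would be a socle element below the top degree. The only gloss is calling (i)$\Rightarrow$(ii) ``tautological'': maximal rank at the spot $\lfloor\frac{e-1}{2}\rfloor$ means \emph{injective} only once one knows $\dim A_{\lfloor\frac{e-1}{2}\rfloor}\le \dim A_{\lfloor\frac{e-1}{2}\rfloor+1}$, which follows because WLP together with the symmetry of the Gorenstein Hilbert function forces unimodality (a one-line application of your own propagation step) --- worth a sentence, but not a gap.
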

\begin{proof}
  See \cite[Proposition 2.1]{MMN}.
\end{proof}

 In the proof of our main results vector bundle theory will play an important role. For sake of completeness we recall here the main results that we will use.

Given a torsion-free sheaf $\sE$ on $\PP^n$, we define its {\em slope} $\mu(\sE)$ as the quotient $c_1(\sE)/rk(\sE)$ and we say that 
$\sE$ is {\em semistable} (respectively, {\em stable}) if there
is no proper subsheaf $\sF$ of $\sE$ with $\mu(\sF) > \mu(\sE)$ (respectively, $\mu(\sF) \ge \mu(\sE)$).

The following criterion due to Bohnhorst and Spindler’s will allow us to check the 
 stability of a syzygy bundle associated to an equigenerated complete intersection Artinian ideal.

\begin{proposition}\label{stable}
 Let $\sE $ be a vector bundle of rank $n$ on $\PP^n$ fitting into a short exact sequence
\[\xymatrix{0\ar[r]&
  {\bigoplus\limits_{i=1}^k\sO(a_i)}\ar[r]&
  {\bigoplus\limits_{j=1}^{n+k}\sO(b_j)}
  \ar[r]& \sE\ar[r]&0\mbox{,}}\]
with ${a_1\ge\cdots\ge a_k}$ and ${b_1\ge\cdots\ge b_{n+k}}$. 

Then $\sE$ is stable if and only if
\[b_1<\mu(E):=\frac{c_1(\sE)}{rk(\sE)}=\tfrac{1}{n}\left[\sum_{j=1}^{n+k}b_j
  -\sum_{i=1}^ka_i\right].\]
\end{proposition}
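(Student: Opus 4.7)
The forward direction is immediate: if $\sE$ is stable, the composition
\[
\sO(b_1)\hookrightarrow\bigoplus_{j=1}^{n+k}\sO(b_j)\twoheadrightarrow\sE
\]
is nonzero (up to cancelling any trivial summands in the presentation), and its image is a rank-one subsheaf of $\sE$; since the saturation is a line bundle $\sO(e)$ with $e\ge b_1$, stability gives $b_1\le e<\mu(\sE)$.

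For the converse, assume $b_1<\mu(\sE)$ and let $\sF\subset\sE$ be a saturated subsheaf of rank $r$ with $1\le r\le n-1$. The heart of the argument is the estimate
\[
c_1(\sF)\le \sum_{j=1}^{r} b_j,
\]
which immediately yields $\mu(\sF)\le \tfrac{1}{r}\sum_{j=1}^{r}b_j\le b_1<\mu(\sE)$, proving stability.

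To establish the estimate, I take $r$-th exterior powers: the saturation inside $\bigwedge^r\sE$ of the image of $\bigwedge^r\sF$ is a line subsheaf $\sO(c)\hookrightarrow\bigwedge^r\sE$ with $c\ge c_1(\sF)$. It therefore suffices to prove $H^0(\bigwedge^r\sE(-e))=0$ whenever $e>\sum_{j=1}^{r}b_j$. I would derive this vanishing from the Koszul-type (Eagon--Northcott) resolution
\[
0\to \bigwedge^{m}\!A\otimes\bigwedge^{r-m}\!V\to\cdots\to A\otimes\bigwedge^{r-1}\!V\to\bigwedge^r\!V\to\bigwedge^r\sE\to 0,
\]
where $A=\bigoplus_{i=1}^{k}\sO(a_i)$, $V=\bigoplus_{j=1}^{n+k}\sO(b_j)$ and $m=\min(r,k)$, together with the associated hypercohomology spectral sequence $E_1^{-q,q}=H^q\bigl(\bigwedge^{q}\!A\otimes\bigwedge^{r-q}\!V(-e)\bigr)$. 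Every term with $q\ge 1$ is $H^q$ of a direct sum of line bundles on $\PP^n$ with $1\le q\le m\le r\le n-1$, hence vanishes by the standard cohomology of projective space. The only surviving contribution is $E_1^{0,0}=H^0(\bigwedge^r\!V(-e))=\bigoplus_{|S|=r}H^0\bigl(\sO(\sum_{j\in S}b_j-e)\bigr)$, and this is zero exactly when $e>\sum_{j=1}^{r}b_j$.

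The main obstacle is this cohomological vanishing: while the Eagon--Northcott resolution itself is classical, the crucial ingredient is the interaction of the rigidity $r\le n-1$ with the standard vanishings $H^i(\sO_{\PP^n}(d))=0$ for $1\le i\le n-1$, which truncates all but the leading term of the spectral sequence. Once this is in place the ordering $b_1\ge\cdots\ge b_{n+k}$ immediately converts the bound on $c_1(\sF)$ into the required slope inequality.
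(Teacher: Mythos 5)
The paper gives no argument for this proposition---it is quoted verbatim from Bohnhorst--Spindler \cite{BS} with a citation---so any self-contained proof is necessarily a different route. Yours is the standard Hoppe-type strategy (bound $c_1(\sF)$ for a saturated rank-$r$ subsheaf by the largest $e$ with $H^0(\bigwedge^r\sE(-e))\neq 0$, computed from a resolution of $\bigwedge^r\sE$), and the skeleton is sound: the direction the paper actually uses ($b_1<\mu(\sE)\Rightarrow\sE$ stable) does follow from it. However, the complex you write down is not exact. The resolution of an exterior power of a quotient bundle uses \emph{symmetric} powers of the sub-bundle and has length $r$ regardless of $k$:
\[
0\to S^rA\to S^{r-1}A\otimes V\to\cdots\to A\otimes\bigwedge^{r-1}V\to\bigwedge^rV\to\bigwedge^r\sE\to 0 .
\]
With your terms $\bigwedge^qA\otimes\bigwedge^{r-q}V$ truncated at $q\le\min(r,k)$ the ranks already fail to sum to zero: for $k=1$, $r=2$, $\operatorname{rk}V=n+1=4$ your complex reads $0\to A\otimes V\to\bigwedge^2V\to\bigwedge^2\sE\to 0$ with alternating rank sum $4-6+3=1$. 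The fix is harmless for your purposes: the correct complex still consists of direct sums of line bundles in homological degrees $1\le q\le r\le n-1$, so $H^q(\PP^n,\text{line bundle})=0$ in that range kills everything except $H^0(\bigwedge^rV(-e))$, and the estimate $c_1(\sF)\le\sum_{j=1}^rb_j$ survives. But as written the central complex is wrong, so this step needs to be replaced.

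The forward implication has a genuine gap that your parenthetical ``up to cancelling any trivial summands'' does not close, because cancellation can strictly lower $b_1$. Indeed the ``only if'' direction is false for the statement as printed: the sequence $0\to\sO(-1)\oplus\sO(5)\to\sO^{n+1}\oplus\sO(5)\to\sT_{\PP^n}(-1)\to 0$ (Euler map on the first factor, identity on the second) satisfies every stated hypothesis, $\sT_{\PP^n}(-1)$ is stable, yet $b_1=5$ while $\mu=1/n$. The equivalence requires the minimality hypothesis of Bohnhorst--Spindler (their condition $a_i<b_{n+i}$), which the paper's formulation omits; under that hypothesis $\sO(b_1)$ cannot lie inside the kernel $\bigoplus_i\sO(a_i)$ (it would split off as a direct summand of it, contradicting minimality), and then your saturation argument for $b_1\le e<\mu(\sE)$ is correct. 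This defect originates in the statement rather than in your proof, and it is invisible in the paper's application (where $k=1$, $a_1=-d<0=b_j$), but a complete proof must either add the hypothesis or restrict to the ``if'' direction, which is the only one the paper uses.
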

\begin{proof}
  See \cite[Theorem 2.7]{BS}.
\end{proof}

The other key ingredient is the so-called Grauert-M\"ulich theorem, which controls the splitting type of a semistable rank $r$ vector bundle $\sE$ on $\PP^n$ when we restrict to a general line $L\subset \PP^n$.

\begin{theorem}\label{splitting} 
Let $\sE$ be a rank $r$ semistable vector bundle on $\PP^n$ and let $L\subset \PP^n$ be a general line. If $\sE _{|L}\cong \oplus _{i=1}^r \sO_L(b_i)$ with $b_1\ge b_2\ge \cdots \ge b_r$ then
$$
0\le b_i-b_{i+1}\le 1
$$
for all $i=1,\cdots, r-1.$
\end{theorem}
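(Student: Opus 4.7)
The plan is to argue by contradiction using the incidence variety of points and lines. Suppose there exists an index $j$ with $b_j - b_{j+1} \ge 2$; the goal is to produce a coherent subsheaf $\sF\subset \sE$ on $\PP^n$ with $\mu(\sF) > \mu(\sE)$, contradicting semistability.

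First, I would set up the flag variety
$$F = \{(x, L)\in \PP^n\times G(1,n):\ x\in L\},$$
together with the two smooth projections $p:F\to \PP^n$ (a $\PP^{n-1}$-bundle, whose fiber over $x$ parametrizes lines through $x$) and $q:F\to G(1,n)$ (a $\PP^1$-bundle, whose fiber over $[L]$ is $L$ itself). The pullback $p^*\sE$ restricted to a $q$-fiber $q^{-1}([L])$ is just $\sE_{|L}$, and for a general line $L_0$ the splitting type is $(b_1,\ldots,b_r)$ by hypothesis. The key observation is that, because the jump $b_j-b_{j+1}\ge 2$ is strict, the partial sum $\bigoplus_{i\le j}\sO_{L_0}(b_i)\subset \sE_{|L_0}$ is uniquely determined as a piece of the Harder--Narasimhan filtration of $\sE_{|L_0}$ on $\PP^1$.

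Second, I would invoke the relative Harder--Narasimhan filtration for the family $p^*\sE$ along the fibers of the $\PP^1$-bundle $q$: by semicontinuity of the HN polygon, there is a dense open $U\subset G(1,n)$ over which the splitting type is constant, and a coherent subsheaf $\mathcal{G}\subset p^*\sE$ (defined over $q^{-1}(U)$ and then extended by reflexive hull or saturation) of relative rank $j$ whose restriction to each fiber $q^{-1}([L])$ recovers $\bigoplus_{i\le j}\sO_L(b_i)$. The third step is the descent: since $p$ is a $\PP^{n-1}$-bundle, the projection formula gives $p_* p^*\sE=\sE$, and I would take $\sF := (p_*\mathcal{G})^{\vee\vee}$, or equivalently the saturation inside $\sE$ of the image of $p_*\mathcal{G}\to \sE$. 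A linear-algebra argument shows $\sF$ has generic rank $j$: over a general point $x\in\PP^n$, the fibers $\mathcal{G}|_{(x,L)}\subset \sE_x$ for varying $L\ni x$ all lie in a common $j$-dimensional subspace, namely the one determined by the top $j$ destabilizing directions, which is locally constant in $x$.

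Finally, I would compare slopes. Restricting $\sF$ to the general line $L_0$ recovers $\bigoplus_{i\le j}\sO(b_i)$ and hence
$$\mu(\sF)=\frac{1}{j}\sum_{i\le j}b_i\ \ge\ b_j,\qquad \mu(\sE)=\frac{1}{r}\sum_{i=1}^r b_i,$$
so a short computation yields
$$\mu(\sF)-\mu(\sE) \ \ge\ \frac{(r-j)(b_j-b_{j+1})}{r}\ \ge\ \frac{2(r-j)}{r}\ >\ 0,$$
contradicting the semistability of $\sE$. The main technical obstacle I expect is the second step: namely, making the relative HN piece $\mathcal{G}$ globally well defined on $F$ (not merely fiberwise) and then descending it cleanly through $p$, because a priori the fiberwise subspaces $\mathcal{G}|_{(x,L)}\subset \sE_x$ could wobble as $L$ rotates around $x$; controlling this wobble requires the characteristic-zero hypothesis, which enters precisely here to guarantee that the relative HN filtration is flat over a dense open of the base.
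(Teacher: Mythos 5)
First, note that the paper does not prove this statement at all: it is the Grauert--M\"ulich theorem and the ``proof'' is a citation of \cite[Theorem 2.1.4]{OSS}. Your sketch is therefore being measured against the standard proof in the literature, and your overall skeleton (incidence variety $F$ with projections $p,q$, relative Harder--Narasimhan filtration along the $q$-fibers, descent along $p$ to a subsheaf $\sF\subset\sE$, slope contradiction) is exactly the right one. The slope estimate at the end is also correct.

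The genuine gap is the step you wave at with ``a linear-algebra argument shows\dots the fibers $\mathcal{G}|_{(x,L)}\subset \sE_x$ for varying $L\ni x$ all lie in a common $j$-dimensional subspace\dots which is locally constant in $x$.'' That assertion \emph{is} the theorem, and nothing in your argument proves it. A useful sanity check: as written, the hypothesis $b_j-b_{j+1}\ge 2$ enters your proof only in the final inequality, where $b_j-b_{j+1}\ge 1$ would already give $\mu(\sF)>\mu(\sE)$; so if your descent step were valid as stated, you would have proved that the generic splitting type of a semistable bundle is constant, which is false (e.g.\ $\sT_{\PP^2}$ is stable with splitting type $(2,1)$ on every line). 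The jump hypothesis must therefore be used to make the descent work, and in the standard proof it is: $\mathcal{G}$ descends along $p$ if and only if the second fundamental form $T_{F/\PP^n}\otimes\mathcal{G}\to p^*\sE/\mathcal{G}$ vanishes, and since $T_{F/\PP^n}$ restricted to a general $q$-fiber $L$ is $\sO_L(-1)^{\oplus(n-1)}$, this form restricts on $L$ to a sum of maps $\sO_L(b_i-1)\to\sO_L(b_k)$ with $i\le j<k$, which are forced to vanish precisely when $b_i-b_k\ge 2$, i.e.\ when the gap is at least $2$. This is also where characteristic zero is really used (a subsheaf annihilated by the relative connection descends only in characteristic $0$; in characteristic $p$ one needs Frobenius-compatibility), not, as you suggest, in the generic flatness of the relative HN filtration, which holds in any characteristic. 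Supplying this second-fundamental-form computation would close the gap and recover the OSS proof.
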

\begin{proof}
  See \cite[Theorem 2.1.4]{OSS}.
\end{proof}


\section{Weak Lefschetz property of equigenerated complete intersections}

This section is entirely devoted to the study of the WLP for equigenerated complete intersections Artinian $K$-algebras. Since WLP is well known for any Artinian $K$-algebras of codimension 2 \cite{Br} and for any Artinian complete intersection $K$-algebras of codimension 3 \cite{HMNW}, we will assume that the codimension is always $\ge 4$. 

Therefore, from now on, we fix integers $n\ge 3$ and $d\ge 2$ and we write $d=an+b$ with $0\le b<n$.
Let $I = (f_0, f_1,\cdots , f_n)\subset R$
be a homogeneous complete intersection ideal, with $deg f_i = d$ for $0\le i\le n$.
We consider the
  syzygy bundle
  \begin{equation}\label{eq: syzygy bundle E}
  {\sE}:= \ker \left ( {\mathcal O}^{\oplus n+1}_{\PP^n}
  \stackrel{(f_0,\cdots, f_n)}{\longrightarrow} {\mathcal
   O}_{\PP^n}(d) \right )
  \end{equation}
  associated to $(f_0,\cdots ,f_n)$. The vector bundle ${\sE}$ has rank $n$
   on $\PP^n$ and $c_1( {\sE})=-d$. Note that 
   $$
   H^1_*((\sE(-d)) = \bigoplus_{t \in \mathbb Z} H^1(\sE(t-d)) \cong R/I
   $$ 
   (cf.\ \cite[Proposition 2.1]{BK}). Let us check that ${\sE}$ is stable. To this end, we consider the exact sequence
  \[
  0 \longrightarrow {\mathcal O}_{\PP^n}(-d) \longrightarrow {\mathcal O}_{\PP^3}^{n+1} \longrightarrow {\mathcal
   E}^\ast\longrightarrow 0.
  \]
  By construction we have
  \[
  0<\mu({\mathcal E}^\ast):=\frac{c_1({\mathcal E}^\ast)}{rk({\mathcal
    E}^\ast)}=\frac{d}{n}.
  \]
  So we can apply Proposition \ref{stable}, and conclude that
  ${\mathcal E}^\ast$ is stable. Since stability is
  preserved under dualizing, we also have that ${\mathcal E}$ is stable.

  Since ${\mathcal E}$ is stable, we can apply Theorem \ref{splitting} and compute the splitting type of $\sE$ on a general line $L\subset \PP^n$. In fact, for a general line $L\subset \PP^n$ we have $\sE _{|L}\cong \oplus _{i=1}^n \sO_L(b_i)$ with $b_1\ge b_2\ge \cdots \ge b_n$,
$0\le b_i-b_{i+1}\le 1$ for $i=1, \cdots , n-1$ and $\sum_{i=1}^rb_i=-d.$ Therefore, we have
\begin{equation}\label{keybounds}
-a-\lfloor\frac{n+1}{2}\rfloor\le b_1 \quad \text{ and } \quad b_n\le -a+ \lfloor\frac{n-1}{2}\rfloor .
\end{equation}

\begin{theorem}\label{main} 
Fix integers $n\ge 3$ and $d\ge 2$. Let $I=(f_0,f_1,\cdots,f_n)\subset R$ a complete intersection ideal with $deg(f_i)=d$, $0\le i \le n$. 

If 
$d-1< t< d+ \lfloor\frac{d}{n} \rfloor- \lfloor\frac{n-1}{2}\rfloor$,
then $I$ satisfies the WLP in degree $t$.
  \end{theorem}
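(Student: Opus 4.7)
The plan is to translate the WLP in degree $t$ into a cohomological vanishing for the syzygy bundle $\sE$, establish that vanishing via Grauert--Mülich on a general line, and propagate it up through a flag of linear subspaces. Since $t\le(n+1)(d-1)/2$ throughout the stated range (a routine check using $d\ge 2$ and $n\ge 3$), the symmetric unimodal Hilbert function of $R/I$ is nondecreasing from degree $t-1$ to degree $t$, so maximal rank of $\times\ell\colon(R/I)_{t-1}\to(R/I)_t$ means injectivity. Using $H^1_*(\sE(-d))\cong R/I$ and the hyperplane section sequence
\[
0\to\sE(t-d-1)\stackrel{\ell}{\to}\sE(t-d)\to\sE|_H(t-d)\to 0,
\]
this multiplication identifies with the connecting map $H^1(\sE(t-d-1))\to H^1(\sE(t-d))$, whose injectivity is equivalent to surjectivity of $H^0(\sE(t-d))\to H^0(\sE|_H(t-d))$. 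The minimal (Koszul) syzygies of $(f_0,\dots,f_n)$ sit in degree $d$, so $H^0(\sE(s))=0$ for $s<d$; since $t<2d$ in the range, the source already vanishes and it suffices to prove $H^0(\sE|_H(t-d))=0$.

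To produce this vanishing I restrict further to a general line $L\subset H\subset\PP^n$. Since $\sE$ is stable, Theorem~\ref{splitting} gives a splitting $\sE|_L\cong\bigoplus_{i=1}^n\sO_L(b_i)$ with $b_1\ge\cdots\ge b_n$, consecutive differences in $\{0,1\}$, and $\sum b_i=-d$. The central computation is the integer upper bound on $b_1$: maximizing $b_1$ subject to these constraints, the extremal configuration has $b_i=b_1-(i-1)$, which forces $nb_1-n(n-1)/2=-d$ and yields
\[
b_1\le\bigl\lfloor-\tfrac{d}{n}+\tfrac{n-1}{2}\bigr\rfloor\le -a+\bigl\lfloor\tfrac{n-1}{2}\bigr\rfloor.
\]
A short case analysis on the parity of $n$ and on $b=d-an$ confirms that this matches the stated range exactly: $s\le -b_1-1$ is guaranteed for $s=t-d$ whenever $t<d+\lfloor d/n\rfloor-\lfloor(n-1)/2\rfloor$. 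Consequently $H^0(\sE|_L(s))=\sum_i\max(0,b_i+s+1)=0$ throughout the range.

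Finally, to lift the vanishing from $L$ to a general hyperplane $H$ I choose any flag $L=P_1\subset P_2\subset\cdots\subset P_{n-1}=H$ of linear subspaces through $L$. For each $i$ the restriction sequence $0\to\sE|_{P_{i+1}}(s-1)\to\sE|_{P_{i+1}}(s)\to\sE|_{P_i}(s)\to 0$ and its long exact cohomology sequence, together with downward induction on $s$ starting from $s\ll 0$ (where $H^0$ vanishes trivially), give the propagation
\[
H^0(\sE|_{P_i}(s'))=0 \text{ for all } s'\le s_0 \ \Longrightarrow\ H^0(\sE|_{P_{i+1}}(s'))=0 \text{ for all } s'\le s_0.
\]
Iterating up the flag yields $H^0(\sE|_H(t-d))=0$, completing the argument. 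The main obstacle throughout is the careful bookkeeping of the floor bound on $b_1$ and the verification that the generic Grauert--Mülich splitting is achieved on the bottom line of the flag; the latter is automatic since a general line inside a general hyperplane is general in $\PP^n$, and everything else is a standard cohomology chase.
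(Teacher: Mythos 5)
Your proposal is correct and follows essentially the same route as the paper's proof: stability of the syzygy bundle, the Grauert--M\"ulich bound $b_{\max}\le -\lfloor d/n\rfloor+\lfloor (n-1)/2\rfloor$ on a general line, propagation of the $H^0$-vanishing up a flag to a general hyperplane, and the identification of $\times\ell$ on $R/I\cong H^1_*(\sE(-d))$ with the map induced by the hyperplane restriction sequence. Your additional verifications (injectivity being the relevant direction via unimodality, and $H^0(\sE(t-d))=0$ from the Koszul syzygies) are not needed for the maximal-rank conclusion but are harmless.
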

  
  \begin{proof} We have to check that there is a linear form $\ell \in R/I$ such that 
  $$
  \times \ell : [R/I]_{t-1}\longrightarrow [R/I]_t
  $$ 
  has maximal rank. 
  
    Let $\sE $ be the rank $n$, stable vector bundle with $c_1(\sE)=-d$ associated to the regular sequence $(f_0,f_1,\cdots,f_n)$. As above we write $d=an+b$ with $0\le b<n$. By (\ref{keybounds}) we know that the splitting type of $\sE$ on a general line $L\subset \PP^n$ is $\sE _{|L}\cong \oplus _{i=1}^n \sO_L(b_i)$ with $-a-\lfloor\frac{n+1}{2}\rfloor\le b_1 \le b_2\cdots \le b_n\le -a+ \lfloor\frac{n-1}{2}\rfloor $. This implies that for any general line $L\subset \PP^n$ it holds:

\begin{equation}\label{vanishing}
   H^0(L,\sE_{|L}(t))=0 \text{ for all } t<a- \lfloor\frac{n-1}{2}\rfloor.
    \end{equation}
 We consider the flag $L\cong\PP^1\subset \PP^2\subset \cdots\subset \PP^{n-1}\subset \PP^n$ and for any $i$ with $2\le i\le n$ the short exact sequence:
 $$
 0\longrightarrow \sE_{|\PP^i}(-1) \longrightarrow \sE_{|\PP^i}\longrightarrow \sE_{|\PP^{i-1}}\longrightarrow 0.
 $$
 Taking cohomology and using the equality (\ref{vanishing}), we conclude that for a general hyperplane $H\subset \PP^n$ we also have 
 \begin{equation}\label{vanishing2}
   H^0(H,\sE_{|H}(t))=0 \text{ for all } t<a- \lfloor\frac{n-1}{2}\rfloor .
    \end{equation}
    Since $A=R/I \cong H^1_{*}(\sE(-d))$, we have that $R/I$ satisfies the WLP in degree t if and only if, for a general linear form $\ell \in [A]_1$, the multiplication map
    $$
    \times \ell: H^1(\PP^n,\sE(t-d-1))\longrightarrow H^1(\PP^n,\sE (t-d))
    $$ 
    has maximal rank. This is automatically satisfied if for a general hyperplane $H\subset \PP^n$ either $H^0(H,\sE_{|H}(t-d))=0$ or $H^1(H,\sE_{|H}(t-d))=0$. Therefore, we conclude that $I$ satisfies the WLP in degree $t$ for all 
    $t\le d+a- \lfloor\frac{n-1}{2}\rfloor $, which proves what we want.
  \end{proof}

\begin{remark}
  The case $t=d$ follows from \cite[Corollary 4]{AR}. 
    
\end{remark}
\begin{remark}
 For $n>3$, the result is completely new while for $n=3$ our result is weaker that the result in \cite[Theorem 4.9]{BMMN} where the authors prove that for $n=3$ and $I=(f_0,f_1,f_2,f_3)\subset K[x,y,z,t]$ a complete intersection ideal generated by 4 forms of degree  $d$, the WLP holds for any $t< \lfloor \frac{3d+1}{2} \rfloor $.
\end{remark}

Using Flenner's result about the fact that (semi)stability is preserved when we restrict to a general hyperplane, we can improve a little bit the previous bound and get a new range where injectivity holds.

 \begin{theorem} \label{bound2}
 Fix integers $n\ge 3$ and $d\ge 2$. Let $I=(f_0,f_1,\cdots,f_n)\subset R$ be a complete intersection ideal with $deg(f_i)=d$, $0\le i \le n$.
    If $t \le d+ \lceil\frac{d}{n}\rceil$, then $I$ satisfies the WLP in degree $t$. 
 \end{theorem}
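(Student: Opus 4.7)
The plan is to sharpen Theorem~\ref{main} by replacing the line-based Grauert-M\"ulich analysis with a hyperplane-based argument via Flenner's restriction theorem. In Theorem~\ref{main} one incurs a $\lfloor (n-1)/2 \rfloor$ loss from the possible spread of the splitting type on a general line; by controlling $\mu$-(semi)stability of $\sE_{|H}$ directly on a general hyperplane $H \cong \PP^{n-1}$, this slack disappears, and the slope inequality on $H$ delivers the cleaner bound $\lceil d/n \rceil$.

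As in Theorem~\ref{main}, the identification $[A]_m \cong H^1(\sE(m-d))$ together with the cohomology sequence attached to
\[
0 \longrightarrow \sE(t-d-1) \longrightarrow \sE(t-d) \longrightarrow \sE_{|H}(t-d) \longrightarrow 0
\]
reduces the WLP in degree $t$ (via Gorenstein duality if $t$ lies above the middle of the Hilbert function) to showing that $H^0(H, \sE_{|H}(t-d)) = 0$ for a general hyperplane $H$. We have already shown that $\sE$ is $\mu$-stable of rank $n$ with slope $-d/n$ on $\PP^n$; since $n \ge 3$ and $\mathrm{char}(K) = 0$, Flenner's restriction theorem asserts that $\sE_{|H}$ is $\mu$-(semi)stable on $H \cong \PP^{n-1}$ for a general $H$. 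A nonzero global section of $\sE_{|H}(s)$ would embed $\sO_H$ into $\sE_{|H}(s)$ as a rank-one subsheaf of nonnegative slope, so (semi)stability forces $\mu(\sE_{|H}(s)) = s - d/n \ge 0$; contrapositively $H^0(H, \sE_{|H}(s)) = 0$ whenever $s < d/n$. At the borderline $s = d/n$, which occurs only when $n \mid d$ (hence $s = \lceil d/n \rceil$), strict $\mu$-stability of $\sE_{|H}$ combined with $\mathrm{rk}(\sE_{|H}) = n \ge 3 > 1$ still rules out such a rank-one subsheaf, so $H^0$ vanishes. Setting $s = t-d$ then yields WLP in degree $t$ for all $t \le d + \lceil d/n \rceil$.

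The main obstacle is in the application of Flenner's restriction theorem: one must verify that, for a rank-$n$ $\mu$-stable bundle on $\PP^n$ with $n \ge 3$ and $c_1 = -d$, the restriction to a general degree-$1$ hyperplane is not only $\mu$-semistable but in fact $\mu$-stable, so that the borderline slope-zero case is covered. The slope argument alone is insufficient at that boundary, and strict stability of the restriction is the crucial ingredient for reaching $\lceil d/n \rceil$ rather than $\lfloor d/n \rfloor$.
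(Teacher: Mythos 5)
Your overall strategy coincides with the paper's: both arguments reduce WLP in degree $t$ to the vanishing $H^0(H,\sE_{|H}(t-d))=0$ for a general hyperplane $H$, obtain semistability of $\sE_{|H}$ from Flenner's restriction theorem (the paper checks the one exception in \cite[Proposition 1.15]{Flenner}, namely that $\sE$ is not a twist of $\Omega^1_{\PP^n}$ or $\sT_{\PP^n}$; you should record this too, since the numerical form of Flenner's theorem does not apply to hyperplane sections), and then conclude by the slope inequality. Up to degree $t\le d+\lceil\frac{d}{n}\rceil-1$ your argument is literally the paper's: semistability of $\sE_{|H}$ gives $H^0(H,\sE_{|H}(s))=0$ for every integer $s<\frac{d}{n}$, i.e.\ for $s\le\lceil\frac{d}{n}\rceil-1$.

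The divergence is in how you try to reach the last degree $t=d+\lceil\frac{d}{n}\rceil$, and there are two problems there. First, your accounting of the borderline is incomplete: the extremal twist is $s=\lceil\frac{d}{n}\rceil$, and when $n\nmid d$ this twist has slope $\mu(\sE_{|H}(s))=\lceil\frac{d}{n}\rceil-\frac{d}{n}$ strictly positive, so the rank-one subsheaf $\sO_H$ arising from a section has slope $0<\mu$ and is excluded neither by semistability nor by strict stability. Your claim that the borderline ``occurs only when $n\mid d$'' is precisely where the case $n\nmid d$ is silently dropped; in that case your method only yields $t\le d+\lfloor\frac{d}{n}\rfloor=d+\lceil\frac{d}{n}\rceil-1$. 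Second, even when $n\mid d$, the strict stability of $\sE_{|H}$ that you invoke is not what the cited restriction theorem provides (Flenner gives semistability of the general hyperplane restriction, modulo the $\Omega^1/\sT$ exception); preservation of \emph{stability} under restriction to a hyperplane is a genuinely stronger statement, and you acknowledge you have not established it. For what it is worth, the paper's own proof also only establishes the vanishing for $s<\lceil\frac{d}{n}\rceil$ and hence the strict range $t<d+\lceil\frac{d}{n}\rceil$, so your proposal lands essentially where the paper's argument does; the extra stability step is neither justified nor, in the case $n\nmid d$, sufficient to close the remaining top degree.
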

 
\begin{proof} We denote by $\sE$ the rank $n$ syzygy bundle associated to $I$. By \cite[Proposition 1.15]{Flenner},
for a general hyperplane $H\subset \PP^n$ the restriction
$\sE _{ |H}$ is not semistable if and only if $\sE$ is either isomorphic to a twist of $\Omega^1_{\PP^n}$ or of $\sT_{\PP^n}$. This is not the case since $\sE$ is defined by the sequence \eqref{eq: syzygy bundle E} and $d\ge 2$. 

By semistability we have
$$ 
H^0(H,\sE _{ |H}(t))=0 \text{ for all } t<\left\lceil \frac{d}{n}\right\rceil =\left\lceil \frac{-c_1(\sE _{ |H})}{rk(\sE _{ |H})}\right\rceil.
$$
 We now argue as in the proof of Theorem \ref{main} using this last vanishing result instead of (\ref{vanishing2}) and we get what we want.
 \end{proof}

 We conclude this section with the following result, which holds for any homogeneous Artinian ideal $I$. The idea is simple and relies on the fact that, if we know the generic splitting type of the syzygy bundle $\sE$ naturally associated with a presentation of $I$, we can always bound the first twist of $\sE$ which provides a section.

 \begin{proposition} Let $I=(f_0,f_1,\cdots,f_r)\subset R$ be an Artinian homogeneous ideal, with $d_i :={\rm deg} f_i$, and 
consider the
  syzygy bundle $\sE$ of rank $r$
  \begin{equation}\label{eq: syzygy sheaf E}
  {\sE}:= \ker \left ( \bigoplus_{i=0}^r{\mathcal O}_{\PP^n}(-d_i)
  \stackrel{(f_0,\cdots, f_r)}{\longrightarrow} {\mathcal
   O}_{\PP^n} \right )
  \end{equation}
  associated with $(f_0,\cdots ,f_r)$.
Let
$b_1 \ge b_2\cdots \ge b_r$ denote
the splitting type of $\sE$ on a general line $L\subset \PP^n$.

Then, if $t< -b_1$, the ideal $I$ satisfies the WLP in degree $t$.
 \end{proposition}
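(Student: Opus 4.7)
The plan is to translate WLP into a cohomological condition on $\sE$ and then to propagate the vanishing forced by the hypothesis $t<-b_1$ from a general line up to a general hyperplane, in parallel with the flag argument used in the proof of Theorem \ref{main}.

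First I would identify $[R/I]_t$ with $H^1(\PP^n,\sE(t))$. Since $I$ is Artinian, $V(I)=\emptyset$ in $\PP^n$, so the presentation map $\bigoplus_{i=0}^r\sO(-d_i)\to\sO$ is surjective and $\sE$ is a genuine vector bundle of rank $r$. Taking cohomology of
\[
0\to\sE(t)\to\bigoplus_{i=0}^r\sO(t-d_i)\to\sO(t)\to 0
\]
and using $n\ge 2$ to kill the $H^1$ of the middle term yields $[R/I]_t\cong H^1(\PP^n,\sE(t))$; under this identification multiplication by a linear form $\ell\in[R/I]_1$ becomes the map $H^1(\sE(t-1))\to H^1(\sE(t))$ induced by the inclusion $\sE(t-1)\hookrightarrow\sE(t)$. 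Hence, letting $H$ be the hyperplane cut out by $\ell$, the cohomology long exact sequence attached to $0\to\sE(t-1)\to\sE(t)\to\sE_{|H}(t)\to 0$ shows that the desired maximal rank (here, injectivity) is implied by the vanishing $H^0(H,\sE_{|H}(t))=0$.

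To produce this hyperplane vanishing I would fix a general complete flag $L=\PP^1\subset\PP^2\subset\cdots\subset\PP^{n-1}=H\subset\PP^n$. On the line $\sE_{|L}(s)\cong\bigoplus_{i=1}^r\sO_L(b_i+s)$ is a sum of line bundles of strictly negative degree for every $s\le t$, since $t<-b_1\le -b_i$; in particular $H^0(L,\sE_{|L}(s))=0$ for all $s\le t$. Induction on $i$ via the restriction sequences
\[
0\to\sE_{|\PP^i}(s-1)\to\sE_{|\PP^i}(s)\to\sE_{|\PP^{i-1}}(s)\to 0
\]
then promotes this vanishing one dimension at a time: the inductive hypothesis forces $H^0(\sE_{|\PP^i}(s-1))\to H^0(\sE_{|\PP^i}(s))$ to be an isomorphism for every $s\le t$, and the common finite-dimensional value is $0$ by Serre vanishing for $s$ sufficiently negative. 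Applied with $i=n-1$, this yields $H^0(H,\sE_{|H}(t))=0$ and completes the proof.

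I do not foresee a real obstacle: the argument is essentially the flag technique of Theorem \ref{main}, with the explicit Grauert--M\"ulich splitting bound simply replaced by the abstract quantity $-b_1$ supplied by the hypothesis on the generic splitting type.
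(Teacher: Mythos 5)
Your proposal is correct and follows essentially the same route as the paper: the paper's proof consists precisely of the identification $R/I\cong H^1_*(\sE)$, the reduction of maximal rank of $\times\ell$ to the vanishing $H^0(H,\sE_{|H}(t))=0$, and the flag induction from the generic line (where $t<-b_1$ forces all twists $b_i+t<0$) up to a general hyperplane, exactly as in Theorem \ref{main}. You merely spell out the details that the paper compresses into ``arguing as in the proof of Theorem \ref{main}.''
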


 \begin{proof}
   By arguing as in the proof of Theorem \ref{main}, we see that for
   a general hyperplane $H\subset \PP^n$ we have 
   $H^0(H,\sE_{|H}(t))=0$ for any $t<-b_1$.
    Since $A=R/I \cong H^1_{*}(\sE)$, we have that $R/I$ satisfies the WLP in degree $t$ if and only if, for a general linear form $\ell \in [A]_1$ the multiplication map
    $
    \times \ell: H^1(\PP^n,\sE(t-1))\longrightarrow H^1(\PP^n,\sE (t))
    $ has maximal rank, which automatically holds if for a general hyperplane $H\subset \PP^n$ we have $H^0(H,\sE_{|H}(t))=0$.
 \end{proof}

\section{Applications}

Our first goal is to apply our main result to the Jacobian ideal of a smooth hypersurface $X\subset \PP^n$ of degree $d$. If $X$ is defined by a homogeneous polynomial $f\in R$ then the Jacobian ideal $J(X)$ is the homogeneous ideal generated by the partial derivatives $\frac{\partial f}{\partial x_i}$, $0 \leq i \leq n$, and it is an Artinian complete intersection generated by forms of degree $d-1$ since $X$ is smooth.
In \cite{B}, Beauville posed the following question:

\begin{question}
 Does the Jacobian ideal of a smooth hypersurface of degree $d$ have the weak Lefschetz property in degree $d$?
\end{question}
\noindent In addition, he pointed out that an affirmative answer to the above question will have important geometric consequences that we will detail later in this section. More generally, in \cite{I} Ilardi asked the following question:

\begin{question}
 Does the Jacobian ideal of a smooth hypersurface have the WLP?
\end{question}

So far, very little is know, and we quickly sumarize what is known until now.

\begin{proposition}
Let $X : f = 0$ be a smooth hypersurface in $\mathbb P^n$ of degree $d > 2$. Then the Jacobian ideal $J(X)$ has the WLP in degree $d -1$.
\end{proposition}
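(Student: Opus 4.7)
The plan is to read this off as a direct corollary of Theorem \ref{bound2}. Smoothness of $X:f=0$ makes the $n+1$ partials $\partial f/\partial x_0,\dots,\partial f/\partial x_n$ a regular sequence of forms of common degree $d-1$, so $J(X)\subset R$ is an equigenerated Artinian complete intersection. Applying Theorem \ref{bound2} with $d-1$ in place of $d$ gives the WLP of $J(X)$ in every degree $t$ with
$$t\,\le\,(d-1)+\left\lceil\tfrac{d-1}{n}\right\rceil.$$
For $d>2$ one has $\lceil (d-1)/n\rceil\ge 1$, so the target degree $t=d-1$ always lies in this range.

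A self-contained argument at this single spot amounts to extracting the core of the proof of Theorem \ref{bound2}. The rank-$n$ syzygy bundle $\sE$ of $J(X)$ has $c_1(\sE)=-(d-1)$; its dual fits in $0\to \sO(-(d-1))\to \sO^{n+1}\to \sE^\vee\to 0$, which satisfies the hypothesis of Proposition \ref{stable} with $b_1=0<(d-1)/n=\mu(\sE^\vee)$, so $\sE^\vee$ (hence $\sE$) is stable. Since $\sE$ is neither a twist of $\Omega^1_{\PP^n}$ nor of $\sT_{\PP^n}$ (the generator degrees rule this out as soon as $d-1\ge 2$), Flenner's theorem makes $\sE_{|H}$ semistable for a general hyperplane $H$, with negative slope $-(d-1)/n$, so $H^0(H,\sE_{|H})=0$. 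Combined with $H^0(\PP^n,\sE)=0$, the cohomology sequence of $0\to \sE(-1)\to \sE\to \sE_{|H}\to 0$ forces the map $\times\ell:H^1(\PP^n,\sE(-1))\to H^1(\PP^n,\sE)$ to be injective; under the identification $A=R/J(X)\cong H^1_*(\sE(-(d-1)))$ this is precisely $\times\ell:[A]_{d-2}\to [A]_{d-1}$, and a short Hilbert-function check gives $\dim [A]_{d-2}\le \dim [A]_{d-1}$ for $d\ge 3$, $n\ge 2$, so injectivity coincides with maximum rank.

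There is essentially no obstacle: the statement is absorbed by Theorem \ref{bound2}, and the only bookkeeping is the degree shift from $d$ (the degree of $X$) to $d-1$ (the common degree of the generators of $J(X)$); the hypothesis $d>2$ enters exactly so that Flenner's theorem applies cleanly and so that the ceiling $\lceil (d-1)/n\rceil$ is at least one.
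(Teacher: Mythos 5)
Your argument is correct in substance but takes a genuinely different route from the paper: the paper's ``proof'' of this Proposition is a pure citation of Ilardi's result (\cite[Proposition 3.1]{I}), whereas you rederive it from Theorem \ref{bound2} --- in effect you specialize the paper's own Theorem \ref{jacobian result2} to the single degree $t=d-1$. Since Theorem \ref{bound2} is proved in Section 3 independently of Ilardi, there is no circularity, and your self-contained version (stability of the syzygy bundle via Proposition \ref{stable}, semistability of the general hyperplane restriction via Flenner, vanishing of $H^0(H,\sE_{|H})$ from negativity of the slope $-(d-1)/n$, and the Hilbert-function comparison $\dim [A]_{d-2}\le \dim [A]_{d-1}$, which indeed reduces to $\binom{n+d-2}{n-1}\ge n+1$ and holds for $d\ge 3$, $n\ge 2$) reproduces the mechanism of Section 3 accurately. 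The one caveat is the range of $n$: Theorem \ref{bound2} and Theorem \ref{jacobian result2} are stated only for $n\ge 3$, while the Proposition carries no such restriction, and for $n=2$ your Flenner step fails literally --- a hyperplane in $\PP^2$ is a line, and a rank-two bundle with odd $c_1=-(d-1)$ can never restrict semistably to $\PP^1$. The conclusion still holds there, either via Grauert--M\"ulich (Theorem \ref{splitting}), which forces the generic splitting type to satisfy $b_1\le -1$ for $d\ge 3$ and hence $H^0(L,\sE_{|L})=0$, or simply because complete intersections of codimension $\le 3$ have the full WLP by \cite{Br} and \cite{HMNW}; you should either restrict your derivation to $n\ge 3$ or add one of these patches. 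What the citation buys the paper is precisely this uniformity in $n$ at no cost; what your route buys is a proof internal to the paper's framework rather than an appeal to the literature.
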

\begin{proof}
 See \cite[Proposition 3.1]{I}.
\end{proof}

In \cite{AR}, Alzati and Re proved the WLP in degree $d$ for any complete intersection generated by forms of degree $d$, not only for Jacobian ideals while in \cite{BMMN}, Ilardi's result was generalized for smooth surfaces in $\PP^3$. Indeed, we have

\begin{proposition}\label{jacobian result}
 (1) Let $X : f = 0$ be a smooth surface in $\mathbb P^3$ of degree $3, 4, 5$ or $6$. Then the Jacobian ideal $J(X)$ has the WLP.

(2) 
Let $X : f = 0$ be a smooth surface in $\mathbb P^3$ of degree $d > 2$. Then the ideal $J(X)$ has the WLP in all degrees $\leq
 \lfloor\frac{3d+1}{2}\rfloor -2$.
\end{proposition}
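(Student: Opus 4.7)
The plan is to reduce both assertions to the bundle-theoretic framework developed in Section 3 and then use Gorenstein symmetry (Lemma \ref{mid map}) to collapse WLP to a single middle-degree check.

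For part (2), the Jacobian ideal $J(X)$ of a smooth surface $X\subset \PP^3$ of degree $d$ is a codimension-$4$ complete intersection generated by the four partial derivatives of $f$, all of degree $d-1$, in $K[x_0,\dots,x_3]$. Thus it fits verbatim the setup of Section 3 with $n=3$ and generating degree $d-1$. I would form the rank-$3$ syzygy bundle $\sE$ on $\PP^3$ attached to $J(X)$, verify its stability via Proposition \ref{stable} (the dual slope $(d-1)/3$ is positive for $d\ge 2$), apply Grauert--M\"ulich (Theorem \ref{splitting}) to bound its splitting type on a general line, and then propagate the resulting $H^0$-vanishing to a general hyperplane by means of Flenner's restriction theorem, exactly as in the proof of Theorem \ref{bound2}. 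The identification $R/J(X)\cong H^1_*(\sE(-(d-1)))$ then translates this vanishing into maximum rank of $\times \ell\colon [R/J(X)]_{t-1}\to [R/J(X)]_t$ in the asserted range. The small gain over Theorem \ref{bound2} should come from exploiting the fact that in rank $3$ the Grauert--M\"ulich splitting window on a general line has only three slots $\{b_1,b_1-1,b_1-2\}$, which allows one to push the vanishing analogous to (\ref{vanishing2}) one step further.

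For part (1), $J(X)$ is Gorenstein with regularity $e=4(d-2)$, so by Lemma \ref{mid map} WLP is equivalent to maximum rank of the single middle multiplication $\times \ell\colon A_{\lfloor (e-1)/2\rfloor}\to A_{\lfloor (e-1)/2\rfloor+1}$. For $d=3,4,5$ a direct arithmetic check shows that this middle spot already lies within the range provided by part (2), so WLP follows at once. The borderline case $d=6$ is the main obstacle: the middle map is $A_7\to A_8$, which sits exactly one slot beyond the bound of part (2). I would close this gap either by a finer analysis of the generic splitting type of the Jacobian syzygy bundle (exploiting that its generators are the partials of a single polynomial and therefore carry more structure than four generic quintics via the Euler relation), or, more pragmatically, by verifying WLP on one explicit smooth sextic by direct computation and invoking semicontinuity of the rank of $\times \ell$ in the family of smooth sextic surfaces.
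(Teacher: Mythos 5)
The paper does not reprove this proposition at all: both parts are quoted directly from \cite{BMMN} (Corollaries 7.2 and 7.3), so your attempt to derive them from the Section~3 machinery is a genuinely different route --- and unfortunately it cannot reach the stated bounds. For part (2), running the argument of Theorem \ref{bound2} with $n=3$ and generating degree $d-1$ yields WLP only for $t \le (d-1) + \lceil\frac{d-1}{3}\rceil$, which is roughly $\frac{4d}{3}$, whereas the claimed bound $\lfloor\frac{3d+1}{2}\rfloor - 2$ is roughly $\frac{3d}{2}$. The deficit is not ``one slot'': it grows like $d/6$ (for $d=20$ you get $26$ versus the claimed $28$; for $d=30$, $39$ versus $43$). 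Grauert--M\"ulich pins the generic splitting type of the rank-$3$ syzygy bundle to within a bounded distance of $c_1/3$, so no refinement of the three-slot window $\{b_1,b_1-1,b_1-2\}$ can produce a vanishing threshold of order $(d-1)/2$. This is exactly why the paper itself remarks after Theorem \ref{main} that for $n=3$ its bound is weaker than \cite[Theorem 4.9]{BMMN}; the proposition you are asked to prove is precisely the specialization of that stronger result, whose proof relies on a much finer analysis of the restricted bundle on $\PP^2$ than the generic splitting type on a line.

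For part (1), your reduction via Lemma \ref{mid map} and the arithmetic singling out $d=6$ are correct (socle degree $e=4(d-2)$, middle map $A_7\to A_8$ for $d=6$ versus the bound $t\le 7$ from part (2)). But the proposed fallback for $d=6$ does not close the gap: semicontinuity of the rank of $\times\ell$ only shows that WLP holds for a \emph{general} smooth sextic once it holds for one explicit example, while the statement asserts it for \emph{every} smooth sextic surface. The alternative suggestion (using the Euler relation to sharpen the splitting type of the Jacobian syzygy bundle) is not carried out and, as noted above, the line-splitting-type method is structurally too weak in this range. So both parts remain unproved as written; the intended proof is the citation of \cite{BMMN}.
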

\begin{proof}
 (1) See \cite[Corollary 7.2]{BMMN}.

 (2) See \cite[Corollary 7.3]{BMMN}.
\end{proof}

We remark that WLP for the Jacobian ideal of a smooth surface $X\subset \PP^3$ is equivalent to injectivity for all $t \leq 2d-3$, so Proposition \ref{jacobian result} covers approximately half the range left open by the Ilardi and Alzati-Re result.
As an immediate consequence of our main result in section 3 we get:

\begin{theorem} \label{jacobian result2}
 Let $X : f = 0$ be a smooth hypersurface in $\mathbb P^n$ with $n \ge 3$ of degree $d > 2$. Then the Jacobian ideal $J(X)$ has the WLP in any degree $t< d-1 + \lceil \frac{d-1}{n}\rceil $.
\end{theorem}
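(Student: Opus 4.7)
\smallskip

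\noindent\textbf{Proof plan.} The strategy is to recognize this as a direct specialization of Theorem \ref{bound2} to the Jacobian ideal of a smooth hypersurface. The only real input is the observation that smoothness converts the Jacobian ideal into an equigenerated Artinian complete intersection, at which point the machinery developed in Section 3 applies verbatim.

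First I would record that because $X:f=0$ is smooth in $\PP^n$, the Jacobian criterion forces the partial derivatives $\partial f/\partial x_0,\dots,\partial f/\partial x_n$ to have no common zero on $\PP^n$. By the unmixedness theorem (or a direct dimension count), this means the $n+1$ forms $\partial f/\partial x_i$ form a regular sequence in $R=K[x_0,\dots,x_n]$, so that $J(X)=(\partial f/\partial x_0,\dots,\partial f/\partial x_n)$ is a homogeneous Artinian complete intersection ideal. Moreover, each generator has degree exactly $d-1$, so $J(X)$ is equigenerated in degree $d-1\ge 2$ (using $d>2$), placing us in the hypotheses of Theorem \ref{bound2}.

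Next, I would apply Theorem \ref{bound2} with the generating degree $d$ of that theorem replaced by $d-1$ and with the same $n$. The theorem then yields WLP in every degree $t$ satisfying
\[
t \le (d-1)+\left\lceil\frac{d-1}{n}\right\rceil,
\]
and in particular in every degree $t<d-1+\lceil(d-1)/n\rceil$, which is precisely the claimed range.

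There is essentially no obstacle: once the complete intersection property of $J(X)$ is in place, the conclusion is a mechanical invocation of Theorem \ref{bound2}. The only point that deserves a line of care is verifying that $d-1\ge 2$ so that the exclusion of the two pathological bundles $\Omega^1_{\PP^n}$ and $\sT_{\PP^n}$ in the proof of Theorem \ref{bound2} remains valid; this is guaranteed by the hypothesis $d>2$.
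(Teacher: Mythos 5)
Your proposal is correct and matches the paper's own proof, which likewise observes that smoothness makes $J(X)$ an Artinian complete intersection generated by $n+1$ forms of degree $d-1$ and then invokes Theorem \ref{bound2}. Your extra remark verifying $d-1\ge 2$ (so the hypotheses of Theorem \ref{bound2} apply) is a small but welcome addition that the paper leaves implicit.
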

\begin{proof}
 It immediately follows from Theorem \ref{bound2} since the Jacobian ideal $J(X)$ of a smooth hypersurface $X\subset \PP^n$ of degree $d$ is an Artinian complete intersection ideal generated by forms of degree $d-1$.
\end{proof}

Following ideas recently developed by Beauville in \cite{B}, we will now apply our result on Jacobian ideals to study the variation of the hyperplane section of a smooth hypersurface $X\subset \PP^{n+1}$. To this end, we demote by ${\mathcal M}_d$ the moduli space of (smooth) hypersurfaces of degree $d$ in $\PP^n$. For any smooth hypersurface $X\subset \PP^{n+1}$ of equation $F=0$, we denote by $X^*\subset (\PP^{n+1})^*$ its dual and we consider the morphism
$$
s_X:(\PP^{n+1})^*\setminus X^*\longrightarrow {\mathcal M}_d\cong \PP(H^0(\sO_{\PP^{n}}(d))/SL_{n+1}, \quad H\mapsto X\cap H.
$$

In \cite{HMP}, Harris, Mazur and Pandharipande posed the following question: 
\begin{question}
 Let $ X\subset \PP^{n+1}$ be a smooth, hypersurface of degree $d$. Does the family of all smooth hyperplane sections of X vary maximally in moduli? or, equivalently, is the morphism $s_X$ generically finite onto its image?
\end{question}

\begin{corollary} \label{cor: maximal variation}
Let $ X\subset \PP^{n+1}$ with $n \ge 2$ be a smooth, hypersurface of degree $d$. For all $d\ge n+3$ the morphism $s_X$ is generically finite onto its image.
\end{corollary}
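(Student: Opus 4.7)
The plan is to combine Theorem~\ref{jacobian result2} with Beauville's observation in \cite{B} that relates WLP of the Jacobian ideal to generic finiteness of $s_X$. Concretely, \cite{B} shows that at a general hyperplane $H\subset\PP^{n+1}$ the differential $ds_X$ coincides, up to standard identifications of deformation theory (the tangent space to $(\PP^{n+1})^*$ at $H$ being the linear forms modulo the one defining $H$, and the tangent space to $\mathcal{M}_d$ at $[X\cap H]$ being the degree $d$ piece of the Jacobian algebra of $F|_H$), with the Lefschetz multiplication
\[
\times\ell\colon (R/J(F))_{d-1}\longrightarrow (R/J(F))_d,
\]
where $R=K[x_0,\dots,x_{n+1}]$ and $F$ is a defining equation of $X$. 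I will therefore take as granted that WLP of $J(X)$ in degree $d$ implies injectivity of $ds_X$ at a generic point, and hence generic finiteness of $s_X$ onto its image.

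With this reduction in place, the corollary becomes a purely numerical consequence of Theorem~\ref{jacobian result2}. The Jacobian ideal $J(X)\subset R$ is an Artinian complete intersection in $n+2$ variables with $n+2$ generators, each of degree $d-1$. Applying Theorem~\ref{jacobian result2} with $n$ replaced by $n+1$ (so that the ambient projective space has the right dimension) yields WLP of $J(X)$ in every degree $t<d-1+\lceil(d-1)/(n+1)\rceil$. The final step is to observe that this bound covers $t=d$ exactly when $\lceil(d-1)/(n+1)\rceil\ge 2$, that is, when $d-1>n+1$, equivalently $d\ge n+3$---precisely the standing hypothesis. The codimension requirement $n+1\ge 3$ for Theorem~\ref{jacobian result2} is automatic from $n\ge 2$.

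The only non-routine ingredient is the appeal to Beauville's identification of $ds_X$ with a Lefschetz multiplication on the ambient Jacobian algebra, which is the conceptual heart of the reduction; once that input is quoted from \cite{B}, the remaining work is arithmetic and no further obstacle arises. If one wanted a self-contained proof, the hardest step would be verifying the stated comparison of differentials---this can be done by computing $ds_X$ in coordinates (differentiating $F(x_0,\dots,x_n,-\epsilon x_i)$ to first order) and then matching kernels via a short Hilbert-series computation for the sequence $0\to A(-1)\xrightarrow{\ell}A\to A/\ell A\to 0$ with $A=R/J(F)$---but this is exactly what is already carried out in \cite{B}.
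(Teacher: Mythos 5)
Your proposal is correct and follows essentially the same route as the paper: apply Theorem~\ref{jacobian result2} (with ambient space $\PP^{n+1}$) to get WLP of $J(X)$ in degree $d$ when $d\ge n+3$, then invoke Beauville's identification of the differential of $s_X$ with the multiplication map $\times\ell\colon (R/J(F))_{d-1}\to(R/J(F))_d$ to conclude generic finiteness. Your arithmetic verification that $\lceil (d-1)/(n+1)\rceil\ge 2$ exactly when $d\ge n+3$ is the same numerical check the paper performs implicitly.
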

\begin{proof}
  By Theorem \ref{jacobian result2} the Jacobian ideal $J(X)$ of the hypersurface $X$ satisfies the WLP in degree 
  $t< d-1 + \lceil \frac{d-1}{n+1}\rceil $. In particular, since $d\ge n+3$, the ideal $J(X)$ verifies the WLP in degree $d$, i.e. the morphism 
  $$
  \times L: [K[x_0,\cdots ,x_{n+1}]/J(X)]_{d-1}\longrightarrow [K[x_0,\cdots ,x_{n+1}]/J(X)]_{d}
  $$ 
  is injective. As observed by Beauville in \cite{B}, such a property is
  equivalent to $s_X$ being unramified at $H=V(L)$, since the proof of
  \cite[Proposition]{B} applies to any degree $d \ge 3$. It follows that $s_X$ is generically finite onto its image.
\end{proof}

\begin{remark}
  Corollary \ref{cor: maximal variation} has also been proved in \cite[Theorem 1.3]{PRT} using a different approach. 
\end{remark}

\end{document}